\documentclass[12pt]{amsart}
\usepackage[top=1in, bottom=1in, left=1in, right=1in]{geometry}
\usepackage{amsfonts}
\usepackage{amsmath}
\usepackage{comment}
\usepackage{float}
\usepackage{amssymb}
\usepackage{graphicx}
\usepackage{bbm}
\usepackage{comment}
\usepackage{mathrsfs}
\numberwithin{equation}{section}
\usepackage{times}
\usepackage[backend=biber, sorting=nyt, url=false,
maxnames = 100,
doi=false]{biblatex}
\usepackage{siunitx}
\usepackage[usenames,dvipsnames]{color}
\usepackage{comment}
\usepackage{xcolor}
\usepackage{mathtools}
\usepackage{bm}
\usepackage{esvect}
\usepackage{hyperref}

\hypersetup{
    colorlinks = true,
linkcolor={black},
urlcolor={blue},
citecolor={blue},    
urlcolor = {blue},
citebordercolor = {0.33 .58 0.33},
 linkbordercolor = {0.99 .28 0.23},
 breaklinks=true}
 
\newcommand{\F}{\mathbb{F}}
\newcommand{\R}{\mathbb{R}}
\newcommand{\Q}{\mathbb{Q}}
\newcommand{\N}{\mathbb{N}}
\newcommand{\Z}{\mathbb{Z}}

\newtheorem{thm}{Theorem}[section]
\newtheorem{defn}[thm]{Definition}

\newtheorem{lem}[thm]{Lemma}
\newtheorem{rem}[thm]{Remark}
\newtheorem{conj}[thm]{Conjecture}

\theoremstyle{remark}

\usepackage{geometry}
\title{Admissible Pairs: A Variation to Pollock Conjectures}
\author{Anji Dong, Vi Anh Nguyen, Alexandru Zaharescu}

\address{
Anji Dong: Department of Mathematics,
University of Illinois Urbana-Champaign,
Altgeld Hall, 1409 W. Green Street,
Urbana, IL, 61801, USA}
\email{anjid2@illinois.edu}

\address{
Vi Anh Nguyen: Department of Mathematics,
University of Illinois Urbana-Champaign,
Altgeld Hall, 1409 W. Green Street,
Urbana, IL, 61801, USA}
\email{vianhan2@illinois.edu}

\address{
Alexandru Zaharescu: Department of Mathematics,
University of Illinois Urbana-Champaign,
Altgeld Hall, 1409 W. Green Street,
Urbana, IL, 61801, USA and ``Simion Stoilow" Institute of Mathematics of the Romanian Academy, 
P. O. Box 1-764, RO-014700 Bucharest, Romania}
\email{zaharesc@illinois.edu}  
\begin{document}
\nocite{*}
\setcounter{tocdepth}{1}
\keywords{Pollock's conjectures, exponential sums, icosahedral numbers, dodecahedral numbers}
\subjclass{Primary: 11P05. Secondary: 11P55, 11L07, 11L15}
\begin{abstract}
We introduce a notion of an admissible pair, and prove a variation to Pollock's conjectures on icosahedral and dodecahedral numbers. 
\end{abstract}
\maketitle

\setcounter{tocdepth}{-1}
\section{Introduction}\label{sec: Introduction}
In 1638, Fermat proposed the famous Fermat polygonal number theorem, which states that every positive integer is a sum of at most $n$ $n$-gonal numbers. A major extension of this result is the higher-dimensional generalization known as Waring's problem, which asks whether, for each $k\in\N$, there exists a natural number $g(k)$ such that each positive integer is the sum of at most $g(k)$ $k\text{-th}$ powers. Hilbert \cite{hilbert1909} proved the existence of \( g(k) \) for every \( k \in \mathbb{N} \). 
Later, Wieferich \cite{Wieferich1908BeweisDS} and Kempner \cite{Kempner1912BemerkungenZW} 
established that \( g(3) = 9 \). The case \( g(4) = 19 \) was resolved by 
Balasubramanian \cite{balasubramanian}, Deshouillers, and Dress \cite{deshouillers}. Chen \cite{chen1964} demonstrated that \( g(5) = 37 \), while Pillai \cite{pillai1940} 
showed that \( g(6) = 73 \). 

The study of perfect powers in Waring’s problem naturally extends to polyhedral
numbers. Let $f$ be a polyhedral polynomial---a polynomial over $\Q$ with a positive leading coefficient, satisfying $f(0)=0$ and $f(1)=1$. Analogous to Waring’s problem, one may ask: What is the least number $g(f_n)$ such that every positive integer can be written as a sum of at most $g(f_n)$ values of $f_n$? In the 1850s, Sir Frederick Pollock \cite{pollock} proposed conjectures for polyhedral numbers associated with the five Platonic solids. To be precise, let $T_n, O_n,C_n,I_n$, and $D_n$ denote the $n\text{-th}$ tetrahedral, octahedral, cubical, icosahedral, and dodecahedral numbers, respectively. Pollock conjectured:
\[
g(T_n)=5,\ g(O_n)=7,\ g(C_n)=9,\  g(I_n)=13,\  g(D_n)=21.
\]

It was later found that $95$ is a counterexample to his icosahedral conjecture, where $95$ requires $15$ terms. Similarly, $79$ is a counterexample to the dodecahedral case, which requires $22$ terms. The conjecture for cubes was mentioned above. In 2016, Brady \cite{Brady_2015} proved Pollock's octahedral number conjecture for all integers greater than $e^{10^7}$. More recently, Basak, Saettone, and two of the authors \cite{DoZa2024} confirmed the corrected icosahedral and dodecahedral conjectures, which state that $g(I_n)=15$ and $g(D_n)=22$.

Pollock's conjectures concern representations of integers using polyhedral numbers of a single form. In this paper, we generalize the definition of $g(f_n)$ to representations involving two forms. 

\begin{defn}
   Let $f$ and $h$ be polyhedral polynomials. A pair $(u,v)\in\Z_{\geq 0}\times \Z_{\geq 0}$ is called an \textbf{admissible pair} associated with $f$ and $h$ if every positive integer $m$ can be expressed as
   \[
   m = \sum_{i=1}^a f(n_i) + \sum_{j=1}^b h(\ell_j),
   \]
   for some $a \leq u$, $b \leq v$, and $n_i, \ell_j \in \mathbb{N}\cup\{0\}$.
\end{defn}
 Immediately, we see that the admissible pairs form an upward-closed set: if $(u,v)$ is admissible, then so is $(u',v')\in\Z^2$ for any $u'\geqslant u$ and $v'\geqslant v$. 
 \begin{defn}
Let $f$ and $h$ be polyhedral polynomials. The set $\mathbf{g(f_n, h_n)}$ consists of all minimal admissible pairs $(u,v)$ in the sense that there exists no admissible pair $(u',v')$ with $u' \leq u$, $v' \leq v$, and at least one inequality is strict. We call any $(u,v)\in g(f_n, h_n)$ an \textbf{essential pair}.
\end{defn}

In this paper, our goal is to study $g(I_n, D_n)$, where $I_n$ and $D_n$ denote the $n$-th icosahedral and dodecahedral numbers respectively, defined by
\begin{align*}
I_n &= \frac{n(5n^2-5n+2)}{2}, 
\ \textrm{and} \quad D_n = \frac{n(3n-1)(3n-2)}{2},
\end{align*} 
for $n \in \mathbb{N}\cup\{0\}$. The first three icosahedral numbers are $0,1,$ and $12$. Similarly, the first three dodecahedral numbers are $0,1,$ and $20$. To simplify notation, we will refer to essential and admissible pairs associated specifically with the icosahedral ($I_n$) and dodecahedral ($D_n$) polynomials simply as \textbf{essential pairs} and \textbf{admissible pairs}. The methods developed here extend naturally to other pairs of polyhedral polynomials to identify possible admissible pairs. 

By \cite{DoZa2024}, both $(15,0)$ and $(0,22)$ are essential pairs. Consequently, all pairs $(u,v)\in\Z_{\geq 0}\times \Z_{\geq 0}$ such that $u\geqslant 15$ or $v\geqslant 22$ are admissible. This leaves us a bounded grid of lattice points to be considered for possible remaining essential pairs. Upon checking the first $12000$ natural numbers for admissible pairs using Algorithm I in \cite{github}, we conjecture the following:
\begin{conj}\label{conj: main conjecture}
    The set of all essential pairs is 
\begin{align*}
 g(I_n, D_n)=\{&(5,6),(4,7),(3,8),(2,9), (1,11),(0,22),(6,5),(7,4),(8,3),(9,2),(10,1),\\
 &(15,0)\}.   
\end{align*}
\end{conj}
\begin{rem}
$(1,10)$ is not an admissible pair, and the smallest counterexample is $31$.
\end{rem}

Our main result confirms Conjecture \ref{conj: main conjecture}:
\begin{thm}\label{thm: main theorem}
   The set of all essential pairs is  
\begin{align*}
 g(I_n, D_n)=\{&(5,6),(4,7),(3,8),(2,9), (1,11),(0,22),(6,5),(7,4),(8,3),(9,2),(10,1),\\
 &(15,0)\}.   
\end{align*}
    Figure \ref{fig:pairs} shows the complete solutions for essential (red) and admissible (blue) pairs. 
\end{thm}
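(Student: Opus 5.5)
The proof has two halves: each listed pair is admissible, and each listed pair is minimal. With the pairs $(15,0)$ and $(0,22)$ already covered by \cite{DoZa2024}, minimality reduces to exhibiting counterexamples. For each of the ten remaining pairs $(u,v)$ I will give an integer $m\le 12000$ not representable with at most $u-1$ icosahedral and $v$ dodecahedral numbers, and another not representable with $u$ and $v-1$. For instance, $31$ shows that $(1,10)$ is not admissible. These witnesses come from the finite search (Algorithm I in \cite{github}) and can be checked directly. By upward closure, it then suffices to know that every lattice point lying weakly below and to the left of the staircase fails. Those points are exactly the ones dominated by some $(u-1,v)$ or $(u,v-1)$ with $(u,v)$ in the list, so the counterexamples show the list is exactly the set of minimal admissible pairs.

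The substantial half is showing that each of the ten middle pairs, say $(u,v)$ with $u+v=11$ or $(1,11)$, is admissible. I will use the standard split of "small $m$ by computation, large $m$ by analysis." For $m\le N_0$ the check is Algorithm I, and I will extend the computation as far as feasible, ideally to about $10^{7}$ or beyond via a sumset/convolution approach. For $m>N_0$ I will use a circle-method or exponential-sum argument as in \cite{DoZa2024}. The aim is to represent every large $m$ as a sum of a few icosahedral and dodecahedral numbers in the ``main'' variables plus a bounded remainder handled by the small-$m$ table.

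Concretely, for large $m$, I will first choose the largest terms greedily. With cubic polynomials, subtracting $I_{n}$ with $n\approx(2m/5)^{1/3}$ leaves a remainder of size $O(m^{2/3})$, and iterating shrinks the remainder to $O(m^{(2/3)^k})$. To obtain exact representations with only about $11$ summands, the greedy method alone is too lossy. Instead I will reserve a fixed number of cubic variables, for example four or five from the mix allowed by $(u,v)$, for a Hardy--Littlewood asymptotic. I will write $m-(\text{greedy part})$ as a sum of $s$ values of $I$ and $D$ lying in a short interval, and prove positivity of the count using:
\begin{itemize}
\item Weyl/Hua bounds on the minor arcs;
\item the singular series being bounded below once local solvability modulo $2,3,5$ is verified.
\end{itemize}
Local solvability is a finite check, because $I_n$ and $D_n$ are integer-valued polynomials whose residues modulo prime powers can be enumerated. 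Choosing which $s$ of the $u+v$ summands are icosahedral and which are dodecahedral lets one analytic theorem serve all ten pairs.

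The main obstacle is the gap between the analytic threshold and the computational range. Explicit circle-method constants for cubic forms with very few variables produce astronomically large thresholds, such as $e^{10^{7}}$ in \cite{Brady_2015}, which no direct search reaches. To bridge this, I will try a bootstrapping argument. Once admissibility of a pair is verified on an interval $[N_0,N_1]$, the fact that consecutive values of $I_n$ near $x$ differ by about $x^{2/3}$ extends it by subtracting one large term, provided the sets of small representable numbers are dense enough. This is the step of \cite{DoZa2024} I would adapt, together with congruence-class refinements to handle the few summands. If the bootstrapping cannot be made to close for the most restrictive pairs, such as $(10,1)$ and $(1,11)$, I would fall back on deriving them from the corresponding single-form results plus extra computation.
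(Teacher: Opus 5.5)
Your minimality half is fine. Your ``bootstrapping'' step is essentially the paper's descent. Below the analytic threshold the paper subtracts the largest available dodecahedral or icosahedral number, stepping back one value if the remainder would fall below $10000$. Each step replaces $m$ by a remainder of size about $m^{2/3}$. After five steps, $m<2.9446\cdot 10^{33}$ is reduced to $10000\le m_5\le 1.666\cdot 10^{6}$, which is checked by computer with the six remaining summands.

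The genuine gap is the large-$m$ input from which this descent must start. A Hardy--Littlewood count with four or five cubic variables is not available. Weyl's inequality and Hua's lemma give an asymptotic formula for sums of values of a cubic polynomial only with at least nine variables (eight, by Vaughan). With five or fewer there is no usable minor-arc estimate, so a lower bound for the singular series proves nothing. Already for cubes, representing every large integer as a sum of five cubes is open. With eight or more variables, the explicit thresholds are of the astronomically large kind you mention, and descent cannot bridge them. Each greedy subtraction only turns $N$ into roughly $N^{2/3}$. With $11$ summands you can afford about five such steps before too few summands remain for a computer check. So only thresholds of roughly $10^{30}$--$10^{40}$ are reachable, not $e^{10^{7}}$. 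Reducing $m$ greedily first and then applying the circle method to the remainder does not help either: it leaves the same analytic problem with a smaller target.

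What is missing is an explicit eight-summand theorem for mixed forms with a moderate threshold. The paper proves that every $m\ge 1.0755\cdot 10^{31}$ is a sum of six icosahedral and two dodecahedral numbers. It also proves that every $m\ge 2.9446\cdot 10^{33}$ is a sum of two icosahedral and six dodecahedral numbers. These come from Linnik's elementary device, not the circle method, in three steps:
\begin{itemize}
\item The sum $f(x+a+1)+f(x-a)+f(x+b+1)+f(x-b)+f(x+c+1)+f(x-c)$ equals a cubic in $x$ plus $(30x+5)$ times a sum of three triangular numbers. Gauss's theorem then absorbs any admissible quotient.
\item Take $x=(p-1)/6$ for a prime $p\equiv 1 \pmod 6$ in the short interval $[1.536\,m^{1/3},1.552\,m^{1/3}]$, which exists by Ramar\'e--Rumely. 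What remains is to find two values with $g(\ell)+g(r)$ in a box and $g(\ell)+g(r)\equiv L \pmod{5p}$.
\item This is settled by counting Lehmer points on the curve $g(x_1)+g(x_2)=L$ over $\mathbb{F}_p$, with explicit error $6.0003\sqrt{p}\log^2 p$.
\end{itemize}

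You also have the difficulty of the pairs reversed. The pairs $(10,1)$ and $(1,11)$ are the easy ones. The paper handles them directly by the single-form eight-term theorems of \cite{DoZa2024} plus descent: eight icosahedral numbers for $m\ge 10^{36}$ and eight dodecahedral numbers for $m\ge 10^{39}$. By contrast, $(4,7),(5,6),(6,5),(7,4)$ do not contain eight summands of a single kind. No single-form result applies to them, so the mixed ``six plus two'' lemmas are indispensable.
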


\begin{figure}[ht]
    \centering
    \includegraphics[width=0.6\linewidth]{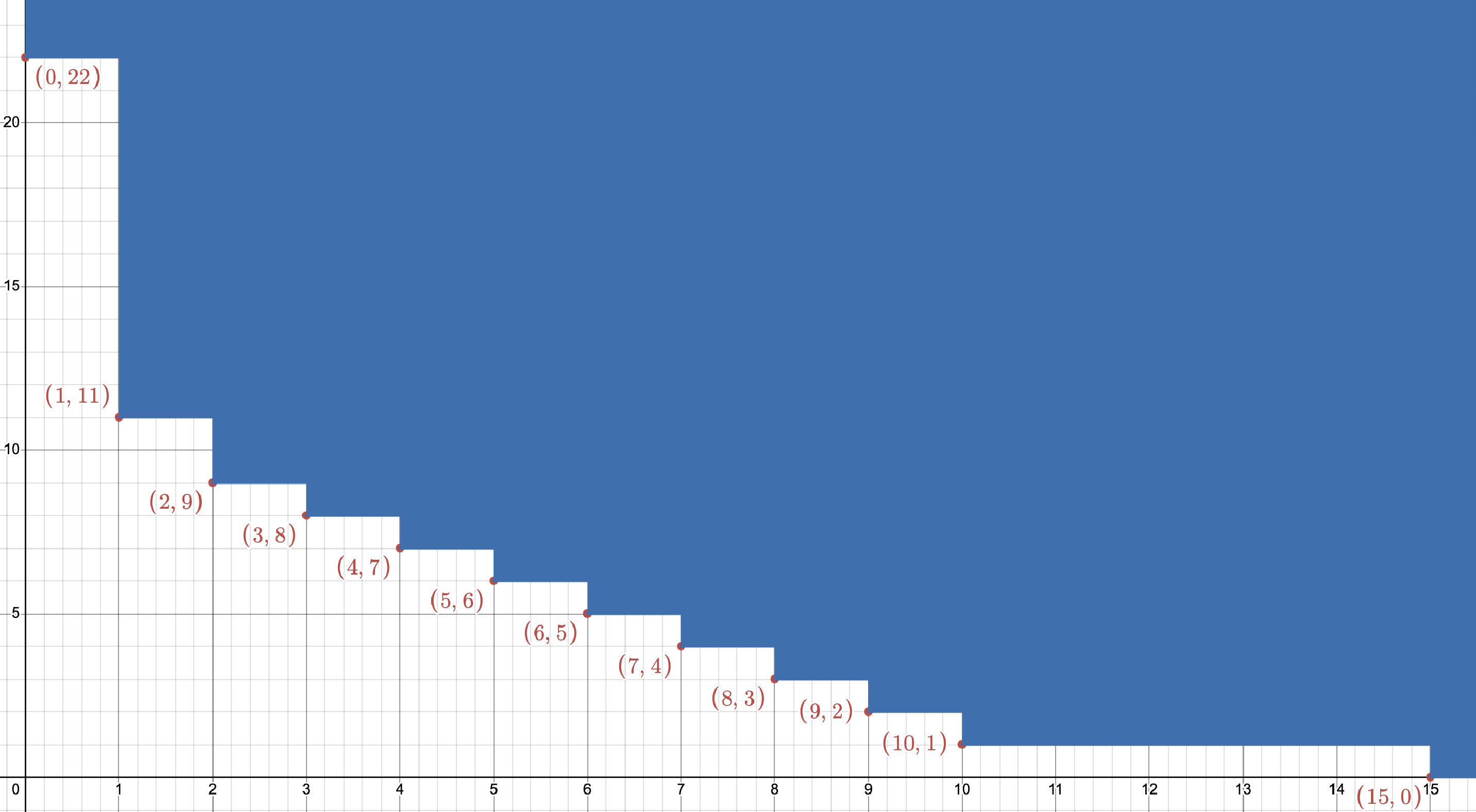}
    \caption{Complete solution for essential and admissible pairs.}
    \label{fig:pairs}
\end{figure}

\subsection*{Structure of the Paper} The paper is organized as follows. In Section \ref{sec: 6+2 lemmas}, we explore the basic properties of an admissible pair and establish two auxiliary lemmas needed in the proof of Theorem \ref{thm: main theorem}. To this end, the lemmas reduce our initial problem to finding essential pairs for a finite set of natural numbers. In Section \ref{sec: proof of main theorem}, we introduce an explicit algorithm, which in turn leads to a complete proof of Theorem \ref{thm: main theorem}.  

\section{"Six Plus Two" Lemmas}\label{sec: 6+2 lemmas}
In this section, we will prove two auxiliary lemmas in preparation for the proof of the main theorem. To begin with, recall the first couple of icosahedral and dodecahedral numbers. Note that for any $(u,v)\in\Z_{\geq 0}\times \Z_{\geq 0}$ to be an admissible pair, we must have $u+v\geqslant 11$ in order to represent the number $11$. This implies that either $u$ or $v$ needs to be greater than or equal to $6$. Therefore, our first strategy is to represent any large enough integer as a sum of eight numbers from the set of icosahedral and dodecahedral numbers. This leaves us with a finite set of positive integers to check for any possible admissible pair. 

To be precise, we will prove the following two lemmata.
\begin{lem}\label{lem: 6 I + 2 D}
    Any integer $m\geqslant \num{1.0755e31}$ can be represented as a sum of six icosahedral numbers and two dodecahedral numbers. 
\end{lem}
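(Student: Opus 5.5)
The plan follows the strategy of \cite{DoZa2024} for the corrected Pollock conjectures, which is an explicit version of the classical method for representing large integers by a bounded number of cubic polynomial values. The idea is to use the two dodecahedral numbers, together with a few of the icosahedral ones, to reduce $m$ to a residue class and size range where a sum of a few icosahedral numbers is available. First, complete the cube in $I_n$. Writing $x=n-\tfrac13$, we get
\[
I_n=\tfrac52\left(x^3+\tfrac{x}{15}\right)+c
\]
for an explicit constant $c$. Hence $4\cdot 27\cdot I_n$ is, up to an affine shift, of the form $y^3+\alpha y$ with $y=3n-1$. Then $\sum_{i=1}^6 I_{n_i}=N$ becomes a statement about sums of six shifted cubes with a linear perturbation. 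Similarly, $D_n=\tfrac{27}{2}\left(n-\tfrac13\right)^3-\tfrac12\left(n-\tfrac13\right)$ up to normalization, so it is also a cube plus a small linear term.

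The core step is a Maillet/Linnik-type identity for six icosahedral numbers. Pair the terms as $I_{a+t}+I_{a-t}$; the cubic part in $t$ cancels to leave $5a^3+15at^2+\ldots$, so it is linear in $t^2$. Taking three such pairs with a common, or cleverly chosen, centre $a$ reduces the representation of
\[
N=\sum_{i=1}^{6} I_{n_i}
\]
to writing $(N-\text{main term in }a)/(15a)$, suitably normalized, as a sum of three squares $t_1^2+t_2^2+t_3^2$ with parity and congruence constraints. By Gauss–Legendre, such a representation exists provided the target is a positive integer not of the form $4^k(8j+7)$ and lies in a suitable range, roughly $t_i\le a$ so that all $n_i\ge 0$. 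So I would prove an explicit claim: for every integer $N$ in an interval $[A,B]$ with $B/A$ bounded below by a fixed constant, and with $N$ in prescribed residue classes modulo some $M$ (a small modulus built from $2,3,5$ and a prime chosen to make $a$ invertible), $N$ is a sum of six icosahedral numbers.

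Next, I would use the two dodecahedral numbers to move $m$ into that good set. For given $m$, choose $\ell_1$ of size about $(2m/27)^{1/3}(1-\delta)$ and $\ell_2$ in a short window, so that $N=m-D_{\ell_1}-D_{\ell_2}$ lands in $[A,B]$ with the centre $a\approx (N/30)^{1/3}$. The consecutive gaps of dodecahedral numbers, about $\tfrac{27}{2}\ell^2$, are much smaller than the interval length when $\ell_2$ is taken smaller, so there is freedom to adjust $\ell_2$ over a complete residue system modulo $M$. Because $D_n \bmod M$ covers enough classes, possibly combined with the choice of $a \bmod M$, the residue condition can be met. A local computation modulo $8$, $9$ and $5$ checks that the three-squares target avoids $4^k(8j+7)$; where it does not, shift $\ell_2$ by one.

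The main obstacle is making everything fully explicit so that the threshold is exactly $m\ge 1.0755\times 10^{31}$. This requires tracking the lower-order terms in $I_n$ and $D_n$, ensuring the three-squares bound $t_i\le a$ holds uniformly, and checking that the window for $\ell_2$ contains a full residue system modulo $M$. I would optimize the parameter $\delta$ and the modulus last, reading off the constant from the inequality requiring the $\ell_2$-window length to exceed $M$ times the dodecahedral gap. The local-solvability bookkeeping modulo $M$ is tedious but finite, and can be verified by a short computation.
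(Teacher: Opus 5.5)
There is a genuine gap, and it is in the congruence step. With your pairing, $I_{a+t}+I_{a-t}=5a^3-5a^2+2a+5(3a-1)t^2$. So three pairs with centre $a$ can only represent integers $N\equiv 3(5a^3-5a^2+2a)\pmod{5(3a-1)}$ (in particular $3a-1\mid 9N-8$), lying in a window of length about $45a^3$, and the constraint $|t_i|\le a$ forces $a\asymp N^{1/3}$.

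The modulus that $m-D_{\ell_1}-D_{\ell_2}$ must hit therefore has size $\asymp N^{1/3}$ and grows with $m$. It cannot be folded into a fixed small $M$, and your ``explicit claim'' about all $N$ in fixed classes modulo a small $M$ does not follow from the identity.

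The adjustment by $\ell_2$ then fails for two reasons. First, any complete residue system modulo $3a-1$ has diameter at least $3a-2$, so $D_{\ell_2}$ would vary by at least $D_{3a-2}\approx 121a^3$, far outside the $\approx 45a^3$ window. Second, even a full residue system would not suffice. Since $D_n=(z^3-z)/6$ with $z=3n-1$ vanishes at three distinct residues modulo any prime $p>3$, a single dodecahedral term cannot reach every class modulo a large prime factor of the modulus. Local checks modulo $5$, $8$, $9$ say nothing about this large modulus. Your remedy for the $4^k(8j+7)$ exceptions (shift $\ell_2$ by one) would also destroy the congruence modulo $5(3a-1)$.

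The missing idea is a two-variable equidistribution result modulo a large prime. The paper uses the asymmetric pairing $f(x+a+1)+f(x-a)$. This gives the modulus $30x+5$ and three \emph{triangular} numbers, so Gauss applies with no exceptional set. It takes $x=(p-1)/6$ with $p\equiv 1\pmod 6$ a prime in $[1.536\sqrt[3]{m},1.552\sqrt[3]{m}]$, whose existence comes from the explicit Ramar\'e--Rumely bounds for $\theta(y;6,1)$; then $30x+5=5p$.

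The condition modulo $5$ is met by fixing $\ell\equiv\ell_0$, $r\equiv r_0\pmod 5$. The condition modulo $p$ becomes finding an $\mathbb{F}_p$-point $(\ell,r)$ on the curve $D_{x_1}+D_{x_2}=L$, where $L$ is the target after removing the constant term of the six-term identity. This point must lie inside the box $((\alpha-\gamma)p,(\alpha+\gamma)p)^2$, of side only $2\gamma p=0.28p$, and have those residues.

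The Cobeli--Zaharescu count of such Lehmer points is $\frac{4\gamma^2}{25}(p+1)+\mathcal{E}$ with $|\mathcal{E}|\le 6.0003\sqrt{p}\log^2 p$, resting on Weil--Bombieri bounds for exponential sums along the curve. It produces a solution once $p$ is large. Both dodecahedral terms are used here as free variables. The threshold on $m$ is fixed by positivity of this count, not by comparing an $\ell_2$-window with $M$ times a dodecahedral gap.
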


\begin{lem} \label{lem: 6 D + 2 I}
     Any integer $m\geqslant \num{2.9446e33}$ can be represented as a sum of two icosahedral numbers and six dodecahedral numbers. 
\end{lem}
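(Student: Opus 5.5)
The approach is the same as for Lemma \ref{lem: 6 I + 2 D}: fold the six dodecahedral numbers into a single quadratic form, and use the two icosahedral numbers to fix a congruence. The key identity is that for the cubic $D(n)=\tfrac{1}{2}(27n^3-27n^2+6n)$ one has, for every $a,t$,
\[
D(a+t)+D(a-t)=2D(a)+\tfrac12 D''(a)\,t^2\cdot 2 = 2D(a)+(81a-27)\,t^2 .
\]
Summing three such pairs with a common centre $a$ gives
\[
\sum_{i=1}^3\bigl(D(a+t_i)+D(a-t_i)\bigr)=6D(a)+27(3a-1)\,(t_1^2+t_2^2+t_3^2),
\]
valid as a sum of six dodecahedral numbers whenever $0\le |t_i|\le a$. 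So it suffices, given $m$, to find $a$ and $x,y\ge 0$ such that
\[
N:=\frac{m-6D(a)-I(x)-I(y)}{27(3a-1)}
\]
is a non-negative integer with $N\le a^2$ and $N$ not of the form $4^k(8l+7)$. Legendre's three-square theorem then supplies $t_1,t_2,t_3$ with $t_i^2\le N\le a^2$.

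I would carry this out in the following order.

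\textbf{Step 1 (choice of the centre).} Choose $a$ as a function of $m$, roughly $a\approx \theta (m/81)^{1/3}$ with a fixed $\theta<1$, so that $R:=m-6D(a)$ lies in a controlled window of size $\asymp a^3$. Restrict $a$ further, by a mild choice among nearby values, so that $p=3a-1$ has convenient arithmetic (for instance, is prime, or at least coprime to small moduli).

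\textbf{Step 2 (the congruence).} Show that $I(x)+I(y)\equiv R \pmod{27p}$ has a solution with $x,y$ in a prescribed interval of length $\ll p^{1-\delta}$. The interval is placed so that $I(x)+I(y)$ lies in the sub-window $[R-27p\,a^2,\,R]$, which forces $0\le N\le a^2$. Modulo $27$ this is a finite check on the values of $I$. Modulo $p$ I would count solutions by orthogonality,
\[
\#\{(x,y)\}=\frac{1}{p}\sum_{h \bmod p} e\!\left(-\frac{hR}{p}\right)S_h^2,\qquad S_h=\sum_{x\in J} e\!\left(\frac{hI(x)}{p}\right).
\]
The main term is $|J|^2/p$. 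For $h\neq 0$, $S_h$ is bounded by the Weil bound $|S_h|\le 2\sqrt p$ for the complete cubic sum, combined with completion, which gives $S_h\ll \sqrt p\log p$ for incomplete sums. This yields a positive count once $|J|\gg p^{1/2+\epsilon}$ with explicit constants, and these constants are what produce a threshold of the size $2.9446\times 10^{33}$.

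\textbf{Step 3 (the three-square obstruction).} Among the many solutions produced in Step 2, vary $(x,y)$, for instance by $x\mapsto x+27p$-shifts when the window allows, or by using two different admissible values of $a$. This changes $N$ modulo $8$ and modulo $4$, so that at least one choice avoids $4^k(8l+7)$. Alternatively, ensure $N\not\equiv 0,7 \pmod 8$ directly by lifting the congruence modulo $8\cdot 27p$.

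The main obstacle is Step 2. The unrestricted solutions $x,y<27p$ give $I(x)\sim 2.5(81a)^3$, far larger than $m$, so one genuinely needs short-interval solutions. One also needs fully explicit constants both in the Weil/completion bound and in the window bookkeeping $R-I(x)-I(y)\in[0,27p\,a^2]$. Balancing $\theta$, the interval length and the error term is where the numerical threshold is determined, and I expect the bulk of the work to lie in making these estimates explicit.
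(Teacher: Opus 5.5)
The main gap is in Step 2: the counting argument you describe cannot produce a positive count. In
\[
\frac1p\sum_{h \bmod p} e\Bigl(-\frac{hR}{p}\Bigr)S_h^2 ,
\]
bounding each $S_h$ ($h\neq0$) individually by Weil plus completion only gives $O(p\log^2p)$ for the non-zero frequencies. Parseval only gives $\frac1p\sum_{h\neq0}|S_h|^2\le 3|J|$, since $I(x)\equiv I(x')$ has at most three roots $x'$. Meanwhile the main term is $|J|^2/p\le |J|$ whenever $|J|\le p$. So no interval compatible with $I(x)\ll m$ works, and the threshold $|J|\gg p^{1/2+\epsilon}$ is not what this method gives.

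What is missing is cancellation in the $h$-sum too, i.e.\ the Bombieri--Weil bound for the two-variable sums $\sum_{(x_1,x_2)\in\mathcal C(\F_p)} e\bigl((h_1x_1+h_2x_2)/p\bigr)$ on the curve $\mathcal C: I(x_1)+I(x_2)=R$. This is exactly the Lehmer-point estimate the paper imports as Lemma \ref{lem: ZC theorem 2}. It counts points of $\mathcal C$ in a box $U_1\times U_2$ with prescribed residues mod $9$ as $\frac{4g_1g_2}{81}(p+1)+O(\sqrt p\log^2 p)$. That guarantees a point only when $|U_1||U_2|\gg p^{3/2}\log^2p$, i.e.\ intervals of length $\gg p^{3/4}\log p$.

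Your claim that one ``genuinely needs short-interval solutions'' is also a misdiagnosis. The window for $I(x)+I(y)$ has length $\asymp p^3$, so $x,y$ may run over intervals of length proportional to $p$. The paper takes $\ell,r\in(0,0.414p)$ with $\ell\equiv\ell_0$, $r\equiv r_0 \bmod 9$. The modulus-$9$ condition is imposed through residue classes, not by letting $x$ run up to $9p$. Long intervals are precisely what let the main term beat $6.0003\sqrt p\log^2p$.

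Step 3 is a difficulty you created and the paper avoids. The paper centres the pairs at the half-integer $x+\tfrac12$, using $D(x+a+1)+D(x-a)$, which gives
\[
\tfrac32(18x^3+9x^2+13x+2)+(54x+9)\Bigl(\tfrac{a(a+1)}2+\tfrac{b(b+1)}2+\tfrac{c(c+1)}2\Bigr).
\]
So it needs a sum of three triangular numbers, which by Gauss has no exceptional set. Taking $p=6x+1$ prime makes the modulus $54x+9=9p$.

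Your proposed fixes do not work as stated:
\begin{itemize}
\item shifting $x$ by $27p$ leaves the size window;
\item changing $a$ gives no control of $N \bmod 8$;
\item $N\not\equiv 0,7\pmod 8$ does not exclude $4^k(8l+7)$ (take $N=28$). You would need $N\not\equiv 0,4,7\pmod 8$, imposed by extra congruences on $x,y$ modulo $16$ inside the point count, which shrinks the main term.
\end{itemize}

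Two smaller points. First, $D_n=\tfrac12(9n^3-9n^2+2n)$, so your identity should read $D(a+t)+D(a-t)=2D(a)+9(3a-1)t^2$, and the modulus is $9(3a-1)$, not $27(3a-1)$. Second, Step 1 needs $p$ genuinely prime together with an explicit primes-in-progressions theorem; ``coprime to small moduli'' is not enough for the Weil-type count. The paper uses Ramar\'e--Rumely to find $p\equiv 1 \bmod 6$ in $[1.26\sqrt[3]{m},\,1.275\sqrt[3]{m}]$.
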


Besides these two lemmata, the proof of the main theorem also hinges on two other useful lemmata in \cite{DoZa2024}. 
\begin{lem}[{\cite[Theorem~9.3]{DoZa2024}}] \label{lem: 8I}
    Any $m\in\N$ with $m\geqslant 10^{36}$ is a sum of 8 icosahedral numbers.
\end{lem}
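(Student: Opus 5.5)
The plan is the classical "pairing" reduction to sums of three squares, with the two leftover terms used to satisfy a congruence condition; this is also the natural route to an explicit threshold like $10^{36}$. The starting identity is
\[
I_{a+x}+I_{a-x}=2I_a+5(3a-1)x^2 ,
\]
which holds because the odd powers of $x$ cancel: $I_n=\tfrac52n^3-\tfrac52n^2+n$. Taking three such pairs with a common centre $a$ gives
\[
\sum_{i=1}^{3}\bigl(I_{a+x_i}+I_{a-x_i}\bigr)=6I_a+5(3a-1)\,(x_1^2+x_2^2+x_3^2),
\]
valid for $0\le x_i\le a$ so that all indices are nonnegative. Hence it suffices to find $a$, $n_7$ and $n_8$ with
\[
N:=\frac{m-6I_a-I_{n_7}-I_{n_8}}{5(3a-1)}\in\Z_{\ge 0},
\]
such that $N$ is a sum of three squares and $N\le a^2$. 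Legendre's theorem handles the first condition whenever $N$ is not of the form $4^k(8l+7)$, and $N\le a^2$ forces every $x_i\le a$.

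The steps, in order, are as follows.

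\emph{Choosing the centre.} Pick $a$ with $6I_a$ slightly below $m$, specifically $m-6I_a\in[c_1a^3,c_2a^3]$ for suitable constants. Then $N$ has size about $a^2$, with room to spare below $a^2$ once the contributions of $I_{n_7}$ and $I_{n_8}$ are subtracted. Since $a$ may vary in a window of length $\asymp a$, we can also arrange that $p=3a-1$ is prime. For that we use an explicit prime-in-short-interval result (e.g.\ Dusart-type bounds), applied to primes $p\equiv 2\pmod 3$ in $[X,X(1+\delta)]$.

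\emph{Congruence condition.} We must solve $I_{n_7}+I_{n_8}\equiv m-6I_a \pmod{5p}$ with $n_7,n_8$ small, say $n_7,n_8<5p$, so that $I_{n_7}+I_{n_8}\ll p^3$ stays within the allowed slack. Modulo $5$ we can check the residues of $I_n$ by hand; this is where the factor $5$ and the $2$ in the denominator are absorbed. Modulo $p$, the number of solutions of $I_{u}+I_{w}\equiv r$ equals $p+O(p^{1/2})$ with an explicit constant, by Weil's bound for the curve $I(u)+I(w)=r$ or equivalently via character sums of the cubic. Hence every residue $r$ is represented once $p$ exceeds an explicit bound. Combining with CRT gives $n_7,n_8<5p$.

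\emph{Legendre obstruction.} We must ensure $N\not\equiv 7\pmod 8$ and that $N$ is not divisible by $4$ in the bad way. We get the extra freedom by replacing $n_8$ with $n_8+5p$ or $n_8+10p$, or by shifting $n_7$ similarly. This changes $N$ by an amount whose residue mod $8$ we can control, so at least one of a few choices avoids the excluded class. Alternatively, we first impose a mod-$8$ condition on $m-6I_a-I_{n_7}-I_{n_8}$, using the freedom in $a$ among primes in a residue class mod $24$.

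I expect the main obstacle to be making the congruence step fully explicit and uniform, i.e.\ guaranteeing that every residue mod $p$ is a sum of two values $I_u+I_w$. This needs an explicit Weil-type estimate: after completing the cube, $2I_n$ becomes $5(n-\tfrac13)^3+\tfrac13(n-\tfrac13)+\text{const}$, so the sums are governed by a nonsingular cubic curve when $p\nmid 30$. We also need to track constants so that the prime-gap input and the size constraints $N\le a^2$, $N\ge 0$ close up. Collecting all explicit constants should give a threshold of the order $10^{36}$.
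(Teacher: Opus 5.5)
Your skeleton (three symmetric pairs about a common centre, a modulus $5p$ with $p$ prime, two spare terms to fix the residue class, explicit primes in progressions) is the same Linnik-type scheme the paper uses. However, the congruence step as you set it up does not close. The requirement $0\le N\le a^2$ confines $I_{n_7}+I_{n_8}$ to an interval of length $5(3a-1)a^2\approx\tfrac59p^3$. Yet $n_7,n_8<5p$ allows $I_{n_7}+I_{n_8}$ to be as large as about $625p^3$, and even $n_7,n_8<p$ allows $5p^3$. So the implied constant in ``$I_{n_7}+I_{n_8}\ll p^3$'' is fatal. Knowing that every residue mod $p$ is some $I_u+I_w$ with $0\le u,w<p$ (a complete Weil bound), then lifting by CRT to a residue mod $5p$, gives no control over where the solution lies.

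What you need is a solution in a short box, $n_7,n_8\in((\alpha-\gamma)p,(\alpha+\gamma)p)$, with $\alpha\pm\gamma$ tuned to the window. The mod-$5$ condition should be imposed directly on the representative in $[0,p)$ rather than through CRT. This is the Lehmer-point estimate the paper relies on (the analogue of Lemma~\ref{lem: ZC theorem} for the curve $I(x_1)+I(x_2)\equiv L \pmod p$). It says the number of such points in the box with $x_j\equiv b_j\pmod 5$ is $\tfrac{4g_1g_2}{25}(p+1)+O(\sqrt p\log^2 p)$. The proof combines Bombieri--Weil bounds for exponential sums along the curve with completion of the incomplete sums. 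This equidistribution-in-boxes input, not surjectivity mod $p$, is the real obstacle. A smaller point: your count $p+O(\sqrt p)$ fails for the single class $r\equiv 8/27$, where the curve splits into the line $u+w=2/3$ and a conic.

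Your handling of the Legendre obstruction has the same size defect. Replacing $n_8$ by $n_8+5p$ increases $I_{n_8}$ by at least $I_{5p}\approx\tfrac{625}{2}p^3$, so $N$ moves by about $62p^2$, far outside $[0,a^2]$ since $a^2\approx p^2/9$. Restricting $a$ (equivalently $p$) to a class mod $8$ does not fix $N\bmod 8$ either, because $I_{n_7}+I_{n_8}\bmod 8$ is still free. You would have to build conditions on $n_7,n_8$ modulo $16$ into the box count, using modulus $80$ in place of $5$; this is possible but raises the threshold.

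The paper avoids the obstruction entirely by pairing asymmetrically, $I_{x+a+1}+I_{x-a}$. This gives $\tfrac32(10x^3+5x^2+9x+2)+(30x+5)(T_a+T_b+T_c)$ with triangular numbers $T_k=k(k+1)/2$. Gauss's three-triangular-numbers theorem has no exceptional set, and choosing $p=6x+1\equiv 1\pmod 6$ makes the modulus exactly $5p$.
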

\begin{lem}[{\cite[Theorem~9.6]{DoZa2024}}] \label{lem: 8D}
    Any $m\in\N$ with $m\geqslant 10^{39}$ is a sum of 8 dodecahedral numbers.
\end{lem}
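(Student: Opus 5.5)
Writing $x=3n-1$ gives $D_n=\tfrac{(x+1)x(x-1)}{6}=\tfrac{x^3-x}{6}$ with $x\equiv 2\pmod 3$. Proving Lemma \ref{lem: 8D} is therefore the same as writing
\[
6m=\sum_{i=1}^{8}\bigl(x_i^3-x_i\bigr),\qquad x_i\equiv 2 \pmod 3,\ x_i\geq -1 .
\]
I would follow a Linnik--Watson style pairing argument. Group the eight terms into four pairs $x=c+d$, $y=c-d$. Since $(c+d)^3+(c-d)^3=2c^3+6cd^2$, each pair contributes
\[
D_{(x+1)/3}+D_{(y+1)/3}=\frac{c^3-c}{3}+c\,d^2 .
\]
The congruence $x,y\equiv 2\pmod 3$ forces $c\equiv 2\pmod 3$ and $d\equiv 0\pmod 3$. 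Nonnegativity of the indices requires $|d|\le c+1$.

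The main step uses three pairs with a common center $a$ and one auxiliary pair with a small center $c$. Writing $d_i=3e_i$ and $d=3e$, the problem becomes
\[
N:=m-a^3+a-\frac{c^3-c}{3}=9a\,(e_1^2+e_2^2+e_3^2)+9c\,e^2 .
\]
The plan has four stages.
\begin{enumerate}
\item Choose $a\equiv 2\pmod 3$ to be a prime in a short interval just below $m^{1/3}$, so that $0<N\ll a^3$. The explicit form of Baker--Harman--Pintz, or an explicit Bertrand-type result, supplies such primes at the size $10^{13}$ relevant here.
\item Pick a small $c\equiv 2\pmod 3$. Then choose $e$ with $0\le e<a$ so that $9ce^2\equiv N\pmod{9a}$. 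Modulo $a$ this needs $N\cdot(9c)^{-1}$ to be a quadratic residue, which I would arrange by letting $c$ range over a few small values. The factor $9$ is handled by fixing the residue of $m$ mod $27$ through the choice of $a$ and $c$ modulo $9$.
\item Set $M=(N-9ce^2)/(9a)$. This is about $m^{2/3}$ in size, and I need $M=e_1^2+e_2^2+e_3^2$ with every $|e_i|\le a/3$. The size constraint is automatic once $M\le a^2/9$, which holds if $a$ is chosen so that $N\le a^3$ up to constants.
\item By Gauss--Legendre, $M$ is a sum of three squares unless $M=4^k(8l+7)$. I would rule this out by varying $e$ over its two residue classes $\pm e \bmod a$, together with the nearby admissible values of $c$, so that $M\bmod 8$ can be moved off $7$ and $4\nmid M$.
\end{enumerate}

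The main obstacle is stage 4 combined with the explicit bookkeeping. The two free parameters $(c,e)$ must simultaneously make $N-9ce^2$ divisible by $9a$, avoid the exceptional class $4^k(8l+7)$, and keep every index nonnegative. All of this has to be done with completely explicit constants, so that the prime gap used for $a$ and the slack in $N\le a^3$ give the threshold $m\ge 10^{39}$.

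If the residue-class juggling for the three-squares condition fails for a few residues of $m$, my first fallback is to replace the three equal-center pairs by two. I would then use an explicit ternary/binary form result, or use the freedom in $c$ modulo $8$ to shift $M$.
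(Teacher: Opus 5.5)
There is a genuine gap in stage 2, and it breaks the construction. You derived yourself that a pair with center $c$ and offset $d=3e$ has both indices nonnegative only when $|e|\le (c+1)/3$. You then take $c$ small and let $e$ be a square root of $N(9c)^{-1}$ modulo the prime $a\asymp m^{1/3}$. Such an $e$ (and likewise $a-e$) is generically of size $\asymp a$. So the second index $(c-3e+1)/3$ is hugely negative, and that term is a negative number, not a dodecahedral number.

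Put differently, if $c\le C$ then the auxiliary pair takes only $O(C^2)$ values, all of size $O(C^3)$. It therefore cannot absorb an arbitrary residue of $m-a^3+a$ modulo $9a$. Varying $c$ over a few small values or over residue classes does not change this.

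To absorb a residue modulo a prime $p\asymp m^{1/3}$, the two leftover terms must have indices of size $\asymp p$ and range over a genuinely two-parameter family. One must then exhibit a point of the curve $D_{x_1}+D_{x_2}\equiv L \pmod p$ (or of your cubic $(c^3-c)/3+9ce^2\equiv \mathrm{const}$) inside a box of side $\asymp p$. That point must also have prescribed residues modulo $9$ and fit the size window. This is the missing ingredient. In the approach the paper follows (see the proof of Lemma~\ref{lem: 6 D + 2 I}, modelled on \cite{DoZa2024}), it comes from a Bombieri--Weil-based count of Lehmer points of the kind stated in Lemma~\ref{lem: ZC theorem 2}. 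That count gives roughly $\frac{4g_1g_2}{81}\,p$ points in the box, with error $O(\sqrt{p}\log^2 p)$. Nothing elementary in your outline replaces it, and this error term, together with an explicit prime in a progression from Ramar\'e--Rumely, is what produces the numerical threshold in lemmas of this type.

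Stage 4 is a further difficulty of your own making. Pairing with even index difference, $D_{k+e}+D_{k-e}$, produces $9a$ times a sum of three squares, and hence the exceptional set $4^j(8l+7)$. Your proposed escape (switching $e\leftrightarrow a-e$, nudging $c$) is not shown to control both $M \bmod 8$ and the power of $4$ dividing $M$. Once the congruence is imposed, $e$ is no longer free anyway.

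Pairing instead with odd index difference, $D_{x+u+1}+D_{x-u}$ with $x=(p-1)/6$ and $p\equiv 1\pmod 6$ prime, turns the six terms into $\tfrac32(18x^3+9x^2+13x+2)+9p\bigl(\tfrac{u(u+1)}{2}+\tfrac{v(v+1)}{2}+\tfrac{w(w+1)}{2}\bigr)$. Gauss's theorem on sums of three triangular numbers has no exceptions. The only real work left is then the two-variable point count described above.
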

\begin{rem}
    At first glance, it seems that we also need lemmas on "Seven plus One" and "One plus Seven". The reason the above four lemmata are sufficient for the proof of Theorem \ref{thm: main theorem} is that any admissible pair $(u,v)$ will require $u+v\geqslant 11$. So, if one of $u$ or $v$ is $7$, the other must be larger than $3$, which means that we can always manually pick out "Six plus Two" or "Two plus Six" from any possible admissible pairs. 
\end{rem}

\begin{rem}
The method we employed to determine the complete set of essential pairs for icosahedral and dodecahedral polynomials requires each pair to contain at least eight terms. As a result, this method is only applicable for finding essential pairs of polyhedral polynomials $f$ and $h$ that satisfy $g(f), g(h) \geq 8$. For instance, when applied to the tetrahedral polynomial $T_n$---where Pollock conjectured that $g(T_n) = 5$---the method fails.
\end{rem}

The proof of Lemma \ref{lem: 6 D + 2 I} is analogous to that of Lemma \ref{lem: 6 I + 2 D}, and thus we will present the full proof of Lemma \ref{lem: 6 I + 2 D}. To begin with, we first consider the sum of six icosahedral numbers using Linnik's method. Let $f(x) = \frac{5x^3}{2}-\frac{5x^2}{2}+x,$ and write
\begin{align}\label{Six Icosahedral Numbers}
\mathcal{S}(6,\mathcal{I}) &= f(x + a + 1) +f(x - a) + f(x+b+1) + f(x- b) + f(x + c + 1) + f(x - c)\notag
\\
&= \frac{3}{2}(10x^{3} + 5x^{2} + 9x + 2) + (30x + 5)\left( \frac{c(c+1)}{2} + \frac{b(b+1)}{2} + \frac{a(a+1)}{2} \right),
\end{align}
with $a,b,c <x$. Since $\frac{n(n+1)}{2}$ is the general form of the $n$-th triangular number, we see that 
\[
 \frac{c(c+1)}{2} + \frac{b(b+1)}{2} + \frac{a(a+1)}{2} 
\]
is the sum of three triangular numbers. Due to Gauss, we know that every positive integer is the sum of at most three triangular numbers. We use this to prove that for any large and fixed $m$, there exist positive integers $\ell$ and $r$ such that 
\begin{align}\label{Eight Icosahedral Numbers}
\mathcal{S}(6,\mathcal{I}) + g(\ell) + g(r)= m,
\end{align}
where $g(x) = \frac{9x^3}{2}-\frac{9x^2}{2}+x$.

From \eqref{Six Icosahedral Numbers} and \eqref{Eight Icosahedral Numbers}, it follows that 
\[
\left( \frac{c(c+1)}{2} + \frac{b(b+1)}{2} + \frac{a(a+1)}{2} \right) = \frac{m - g(\ell) - g(r) - \frac{3}{2}(10x^{3} + 5x^{2} + 9x + 2)}{30x + 5}.
\]

This implies that the expression
\[
m - g(\ell) - g(r) - \frac{3}{2}(10x^{3} + 5x^{2} + 9x + 2)
\]
must be divisible by $30x + 5$. Thus, the problem reduces to the following. For any positive integer $m$ (maybe sufficiently large), it suffices to find $x,\ell,r$ in $\mathbb{Z}^+$ satisfying:
\begin{align*}
   &1. \quad 0< L - g(\ell) - g(r) < 45x^3\\
   &\textrm{and}\\
   &2. \quad (30x+5)\mid (L - g(\ell) - g(r)),
\end{align*}
where $L = m - \frac{3}{2} \left( 10x^{3} + 5x^{2} + 9x + 2 \right) \in \Z^+$. We will refer to the above conditions as Condition 1 and Condition 2 respectively. 

In light of the idea of counting Lehmer points with respect to an irreducible algebraic curve over a finite field, as well as with a restricted range of values and congruence relations discussed by Cobeli and Zaharescu \cite{cobeli}, our strategy now is to simplify the two conditions in order to fit the setting of Lehmer points. 

We first consider Condition 1, and our goal is to find a range of $\ell$ and $r$ that will satisfy the condition. Fix $t,\delta \in \R^{+}$. Our strategy is to find a prime $p$ such that $p \equiv 1 \bmod 6$ and 
\begin{align}
    (t - \delta)p^{3} \leqslant m \leqslant (t + \delta)p^{3}.\label{m bound}
\end{align}

This means that $p$ needs to satisfy the inequalities
\begin{align}\label{Primes in AP Condition}
\bigg ( \frac{m}{t + \delta} \bigg)^{1/3} \leqslant p \leqslant \bigg ( \frac{m}{t - \delta} \bigg)^{1/3}.
\end{align}
If such a $p$ is guaranteed, then we let $x = (p-1)/6$. Therefore, we have that $30x + 5 = 5p.$ Now substituting $x$ into \eqref{m bound}, we find that
\begin{align*}
    &(t-\delta)p^3 \leqslant L +\frac{5(p-1)^3}{72}+\frac{15(p-1)^2}{72}+\frac{9(p-1)}{4}+3\leqslant (t+\delta)p^3.
\end{align*}

When $p$ is sufficiently large, we may reduce this to
\begin{align}\label{L Inequality}
\bigg (t-2\delta-\frac{5}{72}\bigg )p^3\leqslant L\leqslant \bigg (t+\delta-\frac{5}{72}\bigg)p^3.
\end{align}

Therefore, Condition 1 will be satisfied if
\begin{align}
L - \frac{5}{24}p^{3} < g(\ell) + g(r) < L.\label{alternative form of condition 1}  
\end{align}

Recalling the range of $L$, we must choose $r$ and $\ell$ such that
\begin{align}
   \left( t - 2\delta - \frac{5}{18} \right) p^{3} < g(\ell) + g(r) < \left( t + \delta - \frac{5}{72}\right) p^{3}. \label{f(l)+f(r) bound 2}
\end{align}

If $r, \ell \in \left( (\alpha - \gamma) p, (\alpha + \gamma) p\right)$ for some $\alpha,\gamma>0$, then
\begin{align}
  g(\ell) + g(r) < 9(\alpha + \gamma)^{3}p^{3}.  \label{f(l)+f(r) upper bound}
\end{align}

Combining this with \eqref{f(l)+f(r) bound 2}, we may assume 
\begin{align*}
(\alpha + \gamma)^{3} < \frac{t}{9} - \frac{5}{648} + \frac{\delta}{9},
\end{align*}
which implies that
\begin{align}\label{Alpha Range Lower}
\alpha < \left(  \frac{t}{9} - \frac{5}{648} + \frac{\delta}{9} \right)^{1/3} - \gamma.
\end{align}

Similarly, we need 
\begin{align}\label{flfr lower bound}
    9(\alpha - \gamma)^{3}p^{3}<g(\ell) + g(r),
\end{align}
so we may assume that
\begin{align}\label{Alpha Range Upper}
\alpha > \left( \frac{t}{9} - \frac{2\delta}{9} - \frac{5}{162} \right)^{1/3} + \gamma.
\end{align}

Thus, Condition 1 is reduced to finding $\ell,r\in\Z^+$ such that $r,\ell\in \left( (\alpha - \gamma) p, (\alpha + \gamma) p\right)$, where $\alpha$ and $\gamma$ are positive real numbers satisfying \eqref{Alpha Range Lower} and \eqref{Alpha Range Upper}.

Now we consider Condition 2, which can be viewed as a congruence condition. To this end, we need to find $\ell$ and $r$ such that 
$g(\ell) + g(r) \equiv L \bmod 5p.$ Using the Chinese Remainder Theorem, it suffices to consider the congruence equations
\[
g(\ell)+g(r) \equiv L \bmod 5 \text{ and } g(\ell) + g(r) \equiv L \bmod p.
\]

The first equivalence condition is the same as
\begin{align}
    2\ell^3+2r^3-2\ell^2-2r^2+\ell+r\equiv L \bmod 5.\label{first congruence}
\end{align}

Consider $\ell_0,r_0\in \Z/5\Z$ such that $\ell\equiv \ell_0\bmod 5$ and $r\equiv r_0\bmod 5$.  We divide \eqref{first congruence} into five cases: if $L\equiv 0\bmod 5$, then let $\ell_0 =r_0 = 0$. If $L\equiv 1\bmod 5$, then let $\ell_0 =1$ and $r_0=2$. If $L\equiv 2\bmod 5$, then let $\ell_0 =r_0=1$. If $L\equiv 3\bmod 5$, then let $\ell_0=r_0=3$. If $L\equiv 4\bmod 5$, then let $\ell_0=2$ and $r_0=3$. In any case, the chosen $\ell_0$ and $r_0$ will satisfy \eqref{first congruence}.  

Therefore, after reducing both Condition 1 and 2, our original problem will be solved if we can find $\ell,r\in\Z^+$ such that
\begin{align}
r,\ell &\in \left( (\alpha - \gamma) p,(\alpha + \gamma) p\right),\notag\\
\ell &\equiv \ell_0\bmod 5,
r\equiv r_0\bmod 5,
\textrm{and } g(\ell)+g(r)\equiv L\bmod p,\label{final conditions}
\end{align}
where $\alpha$ satisfies the inequalities \eqref{Alpha Range Lower} and \eqref{Alpha Range Upper}. This falls exactly in the setting of Lehmer points mentioned earlier, so we first construct the setup accordingly. Let $1 \leqslant L_{0} \leqslant 5$ be such that $L \equiv L_{0} \bmod 5$. Let $\mathcal{C}$ denote the algebraic curve over $\mathbb{F}_{p}$ given by 
\begin{align}\label{Curve definition}
\mathcal{C}: g(x_1)+g(x_2)-L=0,
\end{align}
where $\F_p$ refers to the prime field of characteristic $p$. 

Define the vectors
\begin{align}
\textbf{x}& := \begin{bmatrix}
           x_1 \\
           x_2 \\
         \end{bmatrix},
    \textbf{a} := \begin{bmatrix}
           5 \\
           5 \\
         \end{bmatrix},
    \textbf{b} := \begin{bmatrix}
            \ell_0\\
            r_0\\
         \end{bmatrix},
    \textbf{t}:= \begin{bmatrix}
         t_1 \\
          t_2 \\
         \end{bmatrix},
         \textbf{g}:= \begin{bmatrix}
         g_1 \\
          g_2 \\
         \end{bmatrix}. \label{vectors}
\end{align}

The set of Lehmer points $\mathcal{L}(p,\mathcal{C},\textbf{a},\textbf{b})$ with respect to the prime $p$, the curve $\mathcal{C}$, and the vectors $\textbf{a},\textbf{b}$ is defined as
\begin{align}\label{Lehmer Point Definition}
\mathcal{L}(p,\mathcal{C},\textbf{a},\textbf{b}) := \{\textbf{x} : x_1 \equiv b_1\bmod a_1, x_2\equiv b_2 \bmod a_2, (x_1,x_2)\in\mathcal{C}, 0\leqslant x_1, x_2<p\} . 
\end{align}

For the prime $p$ and the vectors $\textbf{t}$ and $\textbf{g}$, consider the intervals $U_j = ((t_j-g_j)p, (t_j+g_j)p)$ for $j \in \{1,2\}$. Then the distribution function of the Lehmer points is given by
\begin{align}\label{Lehmer Distribution function}
F(p,\mathcal{C},\textbf{a},\textbf{b};\textbf{g},\textbf{t}):=\#\{\textbf{x}\in \mathcal{L}(p,\mathcal{C},\textbf{a},\textbf{b}):  x_j\in U_j, 0\leqslant t_j\pm g_j\leqslant 1, j=1,2 \}.
\end{align}

With the above setup, we have the following lemma.
\begin{lem}[\cite{DoZa2024}, Lemma 9.4] \label{lem: ZC theorem}
Let $p$ be a prime. Suppose $\mathcal{C}$, $\textnormal{\textbf{a}, \textbf{b}, \textbf{g}, \textbf{t}}$ and $ F$ be as in \eqref{vectors}--\eqref{Lehmer Distribution function}. Then for $p\geqslant 10^{10}$,
\begin{align*}
F(p,\mathcal{C},\textnormal{\textbf{a}, \textbf{b}; \textbf{g}, \textbf{t}}) = \frac{4g_1g_2}{25}(p+1)+ \mathcal{E},
\end{align*}
where $|\mathcal{E}| \leqslant 6.0003\sqrt{p}\log^2 p$.
\end{lem}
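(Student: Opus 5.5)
The plan is to follow the Cobeli--Zaharescu approach: detect both the box constraints and the congruence constraints with additive characters modulo $p$. The main term then comes from counting points on $\mathcal{C}$ with the Hasse--Weil bound, and the error from Bombieri--Weil estimates for exponential sums along the curve. Concretely, for $j=1,2$ let $A_j\subset\{0,\dots,p-1\}$ be the set of integers in $U_j$ congruent to $b_j \bmod 5$. Then
\[
F=\sum_{(x_1,x_2)\in\mathcal{C}(\F_p)}\mathbbm{1}_{A_1}(x_1)\mathbbm{1}_{A_2}(x_2),
\]
where the sum runs over affine points with coordinates represented in $[0,p)$. Each $A_j$ is an arithmetic progression of difference $5$ and length $|A_j|=\tfrac{2g_j}{5}p+O(1)$. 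I will expand
\[
\mathbbm{1}_{A_j}(x)=\frac1p\sum_{h\in\F_p}\widehat{A_j}(h)\,e\!\left(\frac{hx}{p}\right),
\qquad
\widehat{A_j}(h)=\sum_{y\in A_j}e\!\left(-\frac{hy}{p}\right).
\]

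The term $h_1=h_2=0$ gives $|A_1||A_2|\,\#\mathcal{C}(\F_p)/p^2$. To evaluate it, I first show that $\mathcal{C}$ is absolutely irreducible. Since $2g$ is a monic cubic up to the factor $9$, I shift $x_j\mapsto x_j+\tfrac13$ to remove the quadratic term, so $\mathcal{C}$ becomes $9(x_1^3+x_2^3)+c(x_1+x_2)=L'$ for some constant $c$. A reducible cubic must contain a line. Its cubic part $x_1^3+x_2^3$ factors as $(x_1+x_2)(x_1+\omega x_2)(x_1+\omega^2x_2)$, with $\omega$ a primitive cube root of unity (available since $p\equiv1 \bmod 3$). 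Testing each possible line direction directly rules out a linear factor for every $L$, given that $p$ is large; this needs care at the special values of $L'$ where the curve becomes singular. Hasse--Weil (genus $\le 1$) then gives $\#\mathcal{C}(\F_p)=p+1+O(\sqrt p)$, up to the at most three points at infinity. Multiplying out gives the main term $\tfrac{4g_1g_2}{25}(p+1)$, plus an error of size $O(\sqrt p)$ coming from the $O(1)$ rounding in $|A_j|$ and from the Hasse--Weil error.

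For $(h_1,h_2)\neq(0,0)$, the function $h_1x_1+h_2x_2$ is nonconstant on the absolutely irreducible curve $\mathcal{C}$. Bombieri's bound therefore gives
\[
\Bigl|\sum_{\mathcal{C}(\F_p)}e\!\left(\frac{h_1x_1+h_2x_2}{p}\right)\Bigr|\le c_0\sqrt p,
\]
with an explicit $c_0$ depending only on the degree; for a plane cubic one can take $c_0$ close to $2$ plus a contribution from the poles at infinity. The total error is then at most
\[
c_0\sqrt p\;\prod_{j=1}^2\frac1p\sum_{h}\bigl|\widehat{A_j}(h)\bigr|.
\]
The sum $\widehat{A_j}(h)$ is geometric with ratio $e(-5h/p)$, so $|\widehat{A_j}(h)|\le \min\!\bigl(|A_j|,\ 1/(2\|5h/p\|)\bigr)$. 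Because $h\mapsto 5h$ permutes $\F_p^*$, it follows that $\frac1p\sum_{h\neq0}|\widehat{A_j}(h)|\le \frac1\pi\log p+O(1)$. Multiplying the two factors yields a bound of the shape $C\sqrt p\log^2 p$.

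I expect the main obstacle to be the explicit constant: showing $|\mathcal{E}|\le 6.0003\sqrt p\log^2p$ once $p\ge 10^{10}$. This needs sharp forms of three ingredients: the Bombieri--Weil constant for this particular cubic, the $L^1$ bound for the Fourier coefficients of an arithmetic progression (here I would use the sharp estimate $\sum_{0<|h|<p/2}\frac{1}{|\sin(\pi h/p)|}\le \frac{2p}{\pi}(\log p+O(1))$), and the bookkeeping of the lower-order terms, including the mixed terms with exactly one $h_j=0$. With $p\ge10^{10}$, all these lower-order terms can be absorbed into the $0.0003$ slack. The absolute irreducibility check for all residues $L$ is the other delicate point, and I would settle it first by explicitly testing line factors.
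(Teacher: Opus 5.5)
There is a genuine gap at the step you flag as delicate: $\mathcal{C}$ is \emph{not} absolutely irreducible for every $L$. The dodecahedral polynomial is odd about its inflection point, $g(\tfrac23-x)=-g(x)$. In your shifted coordinates $2g(y+\tfrac13)=9y^3-y$, so $c=-1$, $L'=2L$, and the curve is
\[
(x_1+x_2)\bigl(9(x_1^2-x_1x_2+x_2^2)-1\bigr)=2L .
\]
Your test in the direction $(1,-1)$ (substitute $x_2=-x_1-e$) forces $e=0$ and then $L'=0$, so it excludes a line only when $L\not\equiv 0\pmod p$. For $L\equiv 0\pmod p$ the curve splits into the line $x_1+x_2=0$ (i.e.\ $x_1+x_2=\tfrac23$ in the original coordinates) and a smooth conic. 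Two steps of your argument then break. First, $\#\mathcal{C}(\F_p)\approx 2p$ rather than $p+1$. Second, on the line the phase $h_1x_1+h_2x_2$ is constant whenever $h_1=h_2$, so for those $p-1$ frequencies the curve sum has size $\asymp p$ and Bombieri's bound does not apply.

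This cannot be repaired within the statement as written, because for such $L$ the asserted asymptotic is false. In $[0,p)^2$ the line consists of the points with $x_1+x_2\in\{c,\,c+p\}$, where $c\equiv 2\cdot 3^{-1}\pmod p$. These two values differ mod $5$. Meanwhile $b_1+b_2$ runs over every residue except $4$ mod $5$ as $L \bmod 5$ varies, and $L\bmod 5$ is independent of $L\bmod p$. So for suitable $L$ one segment is compatible with $x_j\equiv b_j\pmod 5$. In the full box $t_j=g_j=\tfrac12$ that segment adds at least about $p/15$ points on top of the $\tfrac1{25}p+O(\sqrt p\log^2 p)$ coming from the conic, and this excess exceeds $6.0003\sqrt p\log^2p$ for large $p$.

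The lemma therefore needs the hypothesis $L\not\equiv 0\pmod p$. Under that hypothesis your outline is the standard Cobeli--Zaharescu argument, which the paper imports from its predecessor without proof, and it goes through. For $L\equiv 0$, run your argument on the conic component alone. This gives $F\ge \tfrac{4g_1g_2}{25}(p-1)-O(\sqrt p\log^2 p)$, which is all that the proof of Lemma~\ref{lem: 6 I + 2 D} uses, since it only needs $F>0$.

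There is also a minor slip: your own sine-sum estimate gives $\tfrac1p\sum_{h\neq 0}|\widehat{A_j}(h)|\le \tfrac{2}{\pi}\log p+O(1)$, not $\tfrac1\pi\log p$. Combined with Bombieri's bound $6\sqrt p+O(1)$ for a linear phase on a plane cubic, the corrected constant still lands well under $6.0003$.
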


Now we are ready to prove Lemma \ref{lem: 6 I + 2 D}.

\noindent {\it Proof of Lemma \ref{lem: 6 I + 2 D}.\quad}
We first choose $t = 0.55, \gamma = 0.14$, and $\delta = 0.3$. Then we choose $\alpha =0.14 $, which satisfies the desired range given in \eqref{Alpha Range Lower} and \eqref{Alpha Range Upper}. Apply Lemma \ref{lem: ZC theorem} with $t_1=t_2=\alpha$ and $g_1=g_2=\gamma$. In order to have $F(p,\mathcal{C},\textnormal{\textbf{a}, \textbf{b}; \textbf{g}, \textbf{t}}) > 0$, it suffices to achieve
\begin{align}\label{Prime Requirement}
\gamma^2(p+1)>\frac{25}{4} (6.0003\sqrt{p}\log^2p).
\end{align}

When $m\geqslant \num{1.0755e31}$, we have $p> \num{2.33e10}$, which satisfies \eqref{Prime Requirement}. Now the proof will be fulfilled once we show that when $m\geqslant \num{1.0755e31}$, there exists a prime $p\equiv 1\bmod 6$ such that 
\[
\frac{1}{\sqrt[3]{0.85}}\sqrt[3]{m}\leqslant p\leqslant\frac{1}{\sqrt[3]{0.25}}\sqrt[3]{m},
\]
and Condition 1 is satisfied with such choice of $p$. To do so, we first restrict the range of $p$ to be the sub-interval $[1.536\sqrt[3]{m}, 1.552\sqrt[3]{m}]$. This range of $p$, when $m\geqslant 73073$, guarantees us Condition 1. Next, to show the existence of $p$ in this sub-interval, we apply the result by Ramaré and Rumely \cite[Theorem 1]{ramare1996} with $\varepsilon = 0.004560$. To this end, we obtain 
\begin{align*}
    \max_{1\leqslant y\leqslant 1.536\sqrt[3]{m}}\bigg|\theta(y;6,1)-\frac{y}{2}\bigg|\leqslant 0.00350208\sqrt[3]{m},
\end{align*}
where $\theta(y;k,a)$ is the Chebyshev function. Taking $y=1.536\sqrt[3]{m}$, we have
\begin{align}\label{lower chebyshev}
    \bigg|\theta(1.536\sqrt[3]{m};6,1)-\frac{1.536\sqrt[3]{m}}{2}\bigg|\leqslant 0.00350208\sqrt[3]{m}.
\end{align}

Similarly, take $y=1.552\sqrt[3]{m}$, we have
\begin{align}\label{upper chebyshev}
    \bigg|\theta(1.552\sqrt[3]{m};6,1)-\frac{1.552\sqrt[3]{m}}{2}\bigg|\leqslant 0.00353856\sqrt[3]{m}.
\end{align}

Combining \eqref{lower chebyshev} and \eqref{upper chebyshev}, we obtain
\begin{align*}
    \sum\limits_{\substack{1.536\sqrt[3]{m}<p\leqslant 1.552\sqrt[3]{m} \\ p \equiv 1 \bmod 6}}\log p&=\theta(1.552\sqrt[3]{m};6,1)-\theta(1.536\sqrt[3]{m};6,1)\notag\\
    &\geqslant (0.776\sqrt[3]{m}-0.00353856\sqrt[3]{m})-(0.768\sqrt[3]{m}+0.00350208\sqrt[3]{m})\notag\\
    & = 0.00095936\sqrt[3]{m} > 0.
\end{align*}

Hence, the existence of such prime $p$ is guaranteed in the sub-interval $[1.536\sqrt[3]{m}, 1.552\sqrt[3]{m}\ ]$. This completes the proof of Lemma \ref{lem: 6 I + 2 D}. \qed

We can follow the same method to prove Lemma \ref{lem: 6 D + 2 I}, replacing icosahedral sums with dodecahedral sums.

\noindent {\it Proof of Lemma \ref{lem: 6 D + 2 I}.\quad}
First, we have the sum of six dodecahedral numbers 
\begin{align}\label{Six Dodecahedral Numbers}
\mathcal{S}(6,\mathcal{D}) &= g(x+a+1) + g(x-a) + g(x+b+1) + g(x-b) + g(x+c+1) + g(x-c)\notag
\\
&= \frac{3}{2}(18x^3 + 9x^2 + 13x+2) + (54x+9)\left(\frac{c(c+1)}{2} + \frac{b(b+1)}{2} + \frac{a(a+1)}{2}\right).
\end{align}
where $g(x) = \frac{9x^3}{2} - \frac{9x^2}{2} + x$. Similar to above, we use this to prove that for any large and fixed $m$, there exist positive integers $l$ and $r$ such that
\begin{align*}
\mathcal{S}(6,\mathcal{D}) + f(\ell) + f(r)= m.
\end{align*}

The problem is reduced as follows. For any large enough integer $m$, it suffices to find $x,\ell,r$ in $\mathbb{Z}^+$ satisfying:
\begin{align*}
   &1. \quad 0< L - f(\ell) - f(r) < 
   81x^3\\
   &\textrm{and}\\
   &2. \quad (54x+9)\mid (L - f(\ell) - f(r)),
\end{align*}
where $L = m - \frac{3}{2} \left( 18x^{3} + 9x^{2} + 13x + 2 \right) \in \Z^+$. We will refer to the above conditions as Condition 1 and Condition 2 respectively. 

Fix $t, \delta \in \R^{+}$. Our goal is to find a prime $p$ such that $p \equiv 1 \bmod 6$ and 
\begin{align}\label{con1}
(t - \delta)p^3 \leqslant m \leqslant (t + \delta)p^3.
\end{align}

Again, if such $p$ is guaranteed, then we let $x =  (p-1)/6$. Hence, when $p$ is sufficiently large, Condition 1 is satisfied if
\begin{align*}
L - \frac{3}{8}p^3 < f(\ell) + f(r) < L.
\end{align*}

Recalling the range of $L$, we must choose $r$ and $\ell$ such that
\begin{align}\label{range1}
\left( t - 2\delta - \frac{1}{2} \right) p^3 < f(\ell) + f(r) < \left( t + \delta - \frac{1}{8} \right)p^3.
\end{align}

If $r, \ell \in ((\alpha - \gamma)p, (\alpha + \gamma)p)$ for some $\alpha, \gamma > 0$, then combining with $\eqref{range1}$, we may assume
\begin{align}\label{Con1part1}
\left( \frac{t}{5} - \frac{2\delta}{5} - \frac{1}{10} \right)^{1/3} + \gamma<\alpha < \left( \frac{t}{5} + \frac{\delta}{5} - \frac{1}{40} \right)^{1/3} - \gamma.
\end{align}

Now we address Condition 2. To satisfy this condition, we must find positive integers $\ell, r$ such that
\[
f(\ell) + f(r) \equiv L \bmod 9p.
\]
Using the Chinese Remainder Theorem, it suffices to consider the congruence equations
\begin{align*}
f(\ell) + f(r) \equiv L \bmod 9 \text{  and } f(\ell) + f(r) \equiv L \bmod p, 
\end{align*}
where the first congruence can be rewritten as
\begin{align}\label{findl0r0}
-2\ell^3 - 2r^3 + 2\ell^2 + 2r^2 + \ell + r \equiv L \bmod 9. 
\end{align}

We then consider $\ell_0, r_0 \in \Z/9\Z$ such that $\ell \equiv \ell_0 \bmod 9$ and $r \equiv r_0 \bmod 9$. We divide \eqref{findl0r0} into 9 cases: if $L \equiv 0 \bmod 9$, let $\ell_0 = r_0 = 0$. If $L \equiv 1 \bmod 9$, let $\ell_0 = 1, r_0 = 0$. If $L \equiv 2 \bmod 9$, let $\ell_0 = r_0 = 1$. If $L \equiv 3 \bmod 9$, let $\ell_0 = 8, r_0 = 0$. If $L \equiv 4 \bmod 9$, let $\ell_0 = 1, r_0 = 2$. If $L \equiv 5 \bmod 9$, let $\ell_0 =1$ and $r_0 = 7$. If $L \equiv 6 \bmod 9$, let $\ell_0 = r_0 = 3$. If $L \equiv 7 \bmod 9$, let $\ell_0 = 4, r_0 = 0$. If $L \equiv 8 \bmod 9$, let $\ell_0 = r_0 = 8$. In all cases, the selected values $\ell_0$ and $r_0$ satisfy \eqref{findl0r0}. Therefore, the original problem reduces to finding positive integers $\ell, r$ such that
\begin{align*}
\begin{split}
r, &\ell \in ((\alpha - \gamma)p,(\alpha + \gamma)p),\\
&\ell \equiv \ell_0 \bmod 9, r \equiv r_0 \bmod 9, \text{ and } f(\ell) + f(r) \equiv L \bmod p,
\end{split}
\end{align*}
where $\alpha$ satisfies \eqref{Con1part1}. We will again use the set up involving Lehmer points to tackle this problem. Let $\mathcal{C}$ be the algebraic curve over $\F_p$ given by
\begin{align*}
f(x_1) + f(x_2) - L = 0.
\end{align*}
 Define the vectors
\begin{align}\label{new vectors}
\textbf{x}& := \begin{bmatrix}
           x_1 \\
           x_2 \\
         \end{bmatrix},
    \textbf{a} := \begin{bmatrix}
           9 \\
           9 \\
         \end{bmatrix},
    \textbf{b} := \begin{bmatrix}
            \ell_0\\
            r_0\\
         \end{bmatrix},
    \textbf{t}:= \begin{bmatrix}
         t_1 \\
          t_2 \\
         \end{bmatrix},
         \textbf{g}:= \begin{bmatrix}
         g_1 \\
          g_2 \\
         \end{bmatrix}. 
\end{align}

With this setup, we have the analogous lemma to Lemma \ref{lem: ZC theorem}.
\begin{lem}[\cite{DoZa2024}, Lemma 9.3]\label{lem: ZC theorem 2}
Let $p$ be a prime, and let $\mathcal{C}$, $\textnormal{\textbf{a}, \textbf{b}, \textbf{g}, \textbf{t}}$ be as defined in \eqref{new vectors}. We define $F(p, \mathcal{C}, \textbf{a}, \textbf{b}, \textbf{g}, \textbf{t})$ as in \eqref{Lehmer Distribution function}. Then, for $p \geqslant 10^{10}$,
\begin{align*}
F(p, \mathcal{C}, \textbf{a}, \textbf{b}, \textbf{g}, \textbf{t}) = \frac{4g_1g_2}{81}(p+1) + \mathcal{E},
\end{align*}
where  $|\mathcal{E}| \leqslant 6.0003\sqrt{p}\log^2 p$.
\end{lem}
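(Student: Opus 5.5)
The plan is the Cobeli--Zaharescu scheme for counting Lehmer points, as in \cite{cobeli}: detect the box conditions with additive characters, and bound the resulting exponential sums along the curve with the Weil--Bombieri bound.

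\emph{Step 1: absorb the congruences.} Since $p\geqslant 10^{10}$, the modulus $9$ is invertible in $\F_p$. I substitute $x_j = 9y_j + b_j$. Then the integers $x_j\in U_j\cap[0,p)$ with $x_j\equiv b_j \bmod 9$ correspond bijectively to integers $y_j$ in an interval $I_j$ of length $2g_jp/9+O(1)$. The curve $\mathcal{C}$ becomes
\[
\mathcal{C}':\ f(9y_1+b_1)+f(9y_2+b_2)-L=0,
\]
which is the image of $\mathcal{C}$ under an invertible affine map of $\F_p^2$. So $F$ equals the number of $\F_p$-points of $\mathcal{C}'$ lying in the box $I_1\times I_2$.

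\emph{Step 2: Fourier expansion.} Writing $e_p(z)=e^{2\pi i z/p}$ and $\widehat{1_{I}}(h)=\sum_{y\in I}e_p(-hy)$, I would use
\[
F=\frac{1}{p^2}\sum_{h_1,h_2\in\F_p}\widehat{1_{I_1}}(h_1)\,\widehat{1_{I_2}}(h_2)\sum_{(y_1,y_2)\in\mathcal{C}'(\F_p)} e_p(h_1y_1+h_2y_2).
\]
The term $h_1=h_2=0$ is $|I_1||I_2|\,\#\mathcal{C}'(\F_p)/p^2$. I would check that $\mathcal{C}$ is an absolutely irreducible plane cubic of genus at most $1$ and apply Hasse--Weil, giving $\#\mathcal{C}'(\F_p)=p+1+O(\sqrt p)$ (after accounting for points at infinity). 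This yields the main term $\frac{4g_1g_2}{81}(p+1)$, with an error of size $O(g_1g_2\sqrt p)$.

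\emph{Step 3: the terms with $(h_1,h_2)\neq(0,0)$.} For these, the linear form $h_1y_1+h_2y_2$ is nonconstant on the irreducible cubic, so the Bombieri--Weil bound gives
\[
\Big|\sum_{\mathcal{C}'} e_p(h_1y_1+h_2y_2)\Big|\leqslant c\sqrt p,
\]
with an explicit $c$ depending only on the degree and genus (e.g.\ $c\approx 2g-2+2\deg\leqslant 6$). It then remains to bound $\sum_{h}|\widehat{1_{I}}(h)|$. The standard estimate $|\widehat{1_I}(h)|\leqslant 1/(2\|h/p\|)$ gives $\frac1p\sum_{h\neq0}|\widehat{1_I}(h)|\leqslant \log p+O(1)$. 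The full double sum is therefore at most $c\sqrt p(\log p+O(1))^2$, and for $p\geqslant10^{10}$ the lower-order terms can be absorbed to reach $6.0003\sqrt p\log^2p$.

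\emph{Main obstacle.} The step I expect to be hardest is proving absolute irreducibility of $f(x_1)+f(x_2)=L$ uniformly in $L$ and $p$. Any factorization would have to contain a linear factor. I would rule this out by examining the cubic part $\tfrac52(x_1^3+x_2^3)$, whose linear factors are $x_1+\zeta x_2$ with $\zeta^3=1$, and checking that none of these divides the whole polynomial. Degenerate (singular) values of $L$ should only lower the genus, and the bounds in Steps 2 and 3 still hold in that case. After this, the remaining work is the bookkeeping of explicit constants so that the total error term comes out to $6.0003$.
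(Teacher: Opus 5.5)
Your Steps 1--3 are the Cobeli--Zaharescu argument behind the cited lemma (the paper quotes it from \cite{DoZa2024} without proof), and they work whenever $\mathcal{C}$ is absolutely irreducible. The gap is in the step you flagged yourself: absolute irreducibility does \emph{not} hold uniformly in $L$.

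The cubic $f(x)=\tfrac52x^3-\tfrac52x^2+x$ is point-symmetric about its inflection point $x=\tfrac13$: one has $f(\tfrac13+u)=\tfrac{4}{27}+\tfrac{u}{6}+\tfrac52u^3$. So with $u_j=x_j-\tfrac13$,
\[
f(x_1)+f(x_2)-\tfrac{8}{27}=\Big(x_1+x_2-\tfrac23\Big)\Big[\tfrac52\big(u_1^2-u_1u_2+u_2^2\big)+\tfrac16\Big],
\]
and the candidate factor with $\zeta=1$ really does divide when $L\equiv\tfrac{8}{27}\pmod p$.

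Your check succeeds for every other $L$. For $\zeta\neq1$, the $x_2^2$-coefficient forces the line through $(\tfrac13,\tfrac13)$, and then the linear coefficient is $\tfrac16(1-\zeta)\neq0$. For $\zeta=1$, you are forced onto $x_1+x_2=\tfrac23$, and the remaining constant is $\tfrac{8}{27}-L$. The two other singular values of $L$ give irreducible nodal cubics and are harmless. So the claim that ``degenerate $L$ only lower the genus'' is false for exactly this residue class.

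In that class Step 3 collapses. On the line component $y_1+y_2$ is constant, so every frequency with $h_1=h_2\neq0$ contributes a sum of size $\asymp p$, not $O(\sqrt p)$. In fact the asserted formula itself fails there. Take $\mathbf{t}=\mathbf{g}=(\tfrac12,\tfrac12)$, $b_1=b_2=0$, and a large prime $p\equiv-1\pmod{27}$. Then $N=\tfrac{2p+2}{3}\equiv\tfrac23\pmod p$ and $9\mid N$. The points $(x_1,N-x_1)$ with $9\mid x_1$ and $0<x_1<N$ lie on $\mathcal{C}$ and already give about $2p/27$ points counted by $F$. Compare this with the main term $\tfrac{p+1}{81}$ and an error of size $O(\sqrt p\log^2p)$.

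Hence the lemma needs the extra hypothesis $L\not\equiv\tfrac{8}{27}\pmod p$ (equivalently, $\mathcal{C}$ absolutely irreducible), and your proof is complete only under it. For the use in Lemma~\ref{lem: 6 D + 2 I}, only $F>0$ matters, so the exceptional class can be rescued. Run your Steps 2--3 on the smooth conic component alone: it has genus $0$, the linear form has a pole divisor of degree $2$, so the Bombieri constant is $2$. This gives $F\geqslant\tfrac{4g_1g_2}{81}p-O(\sqrt p\log^2p)$, since the line component only adds points.
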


Now, choose $t = 0.55, \gamma = 0.207$, and $\delta = 0.3$. Then we choose $\alpha =0.207$, which satisfies the desired range given in \eqref{Con1part1}. Apply Lemma \ref{lem: ZC theorem 2} with $t_1=t_2=\alpha$ and $g_1=g_2=\gamma$. In order to have $F(p,\mathcal{C},\textnormal{\textbf{a}, \textbf{b}; \textbf{g}, \textbf{t}}) > 0$, it suffices to achieve
\begin{align}\label{Prime Requirement 2}
\gamma^2(p+1)>\frac{81}{4} (6.0003\sqrt{p}\log^2p).
\end{align}

For all $m\geqslant \num{2.9446e33}$, the corresponding prime $p> \num{1.5131e11}$, thereby fulfilling \eqref{Prime Requirement 2}. By restricting $p$ to the sub-interval $[1.26\sqrt[3]{m}, 1.275\sqrt[3]{m}]$, we ensure that Condition 1 holds for all $m\geqslant 16959$. To establish the existence of such a prime $p$ in this sub-interval, we again invoke Theorem 1 in \cite{ramare1996} with $\varepsilon = 0.004560$, which yields
\begin{align*}
    \max_{1\leqslant y\leqslant 1.275\sqrt[3]{m}}\bigg|\theta(y;6,1)-\frac{y}{2}\bigg|\leqslant 0.002907\sqrt[3]{m}.
\end{align*}
Taking $y=1.275\sqrt[3]{m}$, we have
\begin{align}\label{upper chebyshev 2}
    \bigg|\theta(1.275\sqrt[3]{m};6,1)-\frac{1.275\sqrt[3]{m}}{2}\bigg|\leqslant 0.002907\sqrt[3]{m}.
\end{align}

Similarly, take $y=1.26\sqrt[3]{m}$, we have
\begin{align}\label{lower chebyshev 2}
    \bigg|\theta(1.26\sqrt[3]{m};6,1)-\frac{1.26\sqrt[3]{m}}{2}\bigg|\leqslant 0.0028728\sqrt[3]{m}.
\end{align}

Combining \eqref{upper chebyshev 2} and \eqref{lower chebyshev 2}, we obtain
\begin{align*}
    \sum\limits_{\substack{1.26\sqrt[3]{m}<p\leqslant 1.275\sqrt[3]{m} \\ p \equiv 1 \bmod 6}}\log p&=\theta(1.275\sqrt[3]{m};6,1)-\theta(1.26\sqrt[3]{m};6,1)\notag\\
    &\geqslant (0.6375\sqrt[3]{m}-0.002907\sqrt[3]{m})-(0.63\sqrt[3]{m}+0.0028728\sqrt[3]{m})\notag\\
    & = 0.0017202\sqrt[3]{m} > 0.
\end{align*}

Thus, the existence of such prime $p$ is guaranteed in the sub-interval $[1.26\sqrt[3]{m}, 1.275\sqrt[3]{m}\ ]$, which completes the proof of Lemma \ref{lem: 6 D + 2 I}. \qed

\section{Proof of Theorem \ref{thm: main theorem}}\label{sec: proof of main theorem}
To complete the proof, we make use of the four lemmas in Section \ref{sec: 6+2 lemmas}. Observe that from the pairs $(5,6), (4,7),(3,8)$, and $(2,9)$, we can extract two icosahedral numbers and six dodecahedral numbers. Therefore, applying Lemma \ref{lem: 6 D + 2 I}, it suffices to show that these pairs are essential pairs for $m<\num{2.9446e33}$. Similarly, we may apply Lemma \ref{lem: 8D} to the pairs $(1,11)$ and $(0,22)$, Lemma \ref{lem: 6 I + 2 D} to $(6,5),(7,4),(8,3),(9,2)$, and Lemma \ref{lem: 8I} to $(10,1)$ and $(15,0)$. By the results in \cite{DoZa2024}, $(15,0)$ and $(0,22)$ are confirmed to be essential pairs, so we only need to focus on the other 10 pairs.

For brevity, we will focus on the pairs $(5,6), (4,7),(3,8)$, and $(2,9)$, and the proofs for the other pairs follow similarly. To show that these four pairs are essential, we make use of the following steps. \\

Let $(a,b)$ be the pair that we want to check, $i=0$, and $m_i = m$. 
\subsection*{Step 1.}  Let $n_i$ be the integer such that $g(n_i)< m_i< g(n_i+1)$. If $m_i\in (g(n_i),g(n_i)+10000)$, then let $m_{i+1}=m_i-g(n_i-1)$. Otherwise, let $m_{i+1}=m_i-g(n_i)$.
\subsection*{Step 2.} Take $m_i - m_{i+1}$ to be the first term in the sum, and reduce the problem to writing $m_{i+1}$ as a sum of one fewer dodesahedral numbers. 
\subsection*{Step 3.} Increase $i$ by one and repeat Step 1 to 3.\\

Apply the above steps to a positive integer $m_0 < \num{2.9446e33}$. If there exists some $n_0\in \mathbb{N}$ such that $m_0\in [g(n_0)+10000, g(n_0+1))$, then let $m_1=m_0-g(n_0)$. We deduce that 
\begin{align}
       & m_1 < g(n_0+1)-g(n_0)= \frac{27}{2}n_0^2+\frac{9}{2}n_0+1.\label{m_1 bound}
\end{align}

Since $m_0<\num{2.9446e33}$, we have
\[
g(n_0) = \frac{9}{2}n_0^3-\frac{9}{2}n_0^2+n_0<\num{2.9446e33},
\]
which implies that $n_0<\num{8.682e10}.$ Substituting this into \eqref{m_1 bound}, we obtain $m_1<\num{1.018e23}$. When $m_0\in (g(n_0),g(n_0)+10000)$, by taking $m_1 = m_0-g(n_0-1)$, we have
\begin{align*}
m_1 < g(n_0)+10000-g(n_0-1) = \frac{27}{2}n_0^2-\frac{45}{2}n_0+10010.
\end{align*}

Again, since $m_0<\num{2.9446e33}$, we conclude that $m_1<\num{1.018e23}$ as well. Therefore, in both cases, it suffices to write $10000\leqslant m_1\leqslant \num{1.018e23}$ as a sum of $a$ icosahedral numbers and $(b-1)$ dodecahedral numbers.

Note that in any round, we can switch from dodecahedral numbers to icosahedral numbers, that is, reducing the problem to writing $m_i$ as the sum of one fewer icosahedral numbers. Then we will have
\begin{align*}
m_i&<\max\left\{f(n_i+1)-f(n_i), f(n_i)-f(n_i-1)+10000\right\} \\
&= \max\left\{\frac{15}{2}n_i^2+\frac{5}{2}n_i+1,\frac{15}{2}n_i^2-\frac{25}{2}n_i+10006 \right\}.
\end{align*}

Applying the steps accordingly to pairs $(5,6),(4,7),(3,8),$ and $(2,9)$, the original problem for any of these pairs can be reduced to writing $10000\leqslant m_5\leqslant \num{1.666e6}$ as a sum of two icosahedral numbers and four dodecahedral numbers.

For the pair $(10,1)$, we apply the above steps five times and reduce the problem to representing an integer $10000\leqslant m_5\leqslant \num{2.13e6}$ as a sum of at most five icosahedral numbers and one dodecahedral number. For the pairs $(6, 5), (7, 4), (8, 3), (9, 2)$, five applications of these steps reduce the problem to writing an integer $m_5$ such that $10000\leqslant m_5\leqslant \num{5.74e5}$ as a sum of at most four icosahedral numbers and two dodecahedral numbers. The remaining pair $(1,11)$ requires representing an integer $m_6$ with $\num{12000} \leqslant m_6 \leqslant \num{2.1e5}$ as a sum of at most one icosahedral number and five dodecahedral numbers. We complete the proof for all cases using the corresponding algorithms in the ``Algorithm II" folder of \cite{github}.
\qed

\printbibliography

@article{bombieri,
 URL = {http://www.jstor.org/stable/2373048},
 author = {Bombieri, E.},
 journal = {American Journal of Mathematics},
 number = {1},
 pages = {71--105},
 publisher = {Johns Hopkins University Press},
 title = {On Exponential Sums in Finite Fields},
 urldate = {2024-05-23},
 volume = {88},
 year = {1966}
}

@article{pollock,
 author = {Pollock, S. F.},
 journal = {Proceedings of the Roy Society of London},
 title = {On the extension of the principle of Fermat’s theorem of the polygonal numbers to the higher
orders of series whose ultimate differences are constant.},
 year = {1850},
volume = {5},
pages = {922–924}

}

@article{ramare1996,
  title={Primes in Arithmetic Progressions},
  author={Ramaré, O. and Rumely, R.},
  journal={Mathematics of Computation},
  volume={65},
  number={213},
  pages={397--425},
  year={1996},
  publisher={American Mathematical Society}
}

@article{DoZa2024,
  title={Representations as Sums of Icosahedral and Dodecahedral Numbers: Proof of Pollock’s Conjectures},
  author={Basak, D. and Dong, A. and Saettone,  K and Zaharescu, A.},
  journal={International Mathematics Research Notices},
  year={2025},
 volume = {2025}, 
}

@article{cobeli,
author = {Cobeli, C. and Zaharescu, A.},
year = {2001},
pages = {301-307},
title = {Generalization of a problem of Lehmer},
volume = {104},
journal = {Manuscripta Mathematica},
doi = {10.1007/s002290170028}
}

@article{Brady_2015,
   title={Sums of seven octahedral numbers},
   volume={93},
   url={http://dx.doi.org/10.1112/jlms/jdv061},
   DOI={10.1112/jlms/jdv061},
   number={1},
   journal={Journal of the London Mathematical Society},
   publisher={Wiley},
   author={Brady, Z. E.},
   year={2015},
   pages={244–272} }

@article{hilbert1909,
    author = {Hilbert, D.},
    title = {Beweis f\"ur {D}arstellbarkeit der ganzen {Z}ahlen durch eine feste {A}nzahl nter {P}otenzen ({W}aringsche {P}roblem)},
    journal = {Mathematische Annalen},
    year = {1909},
    volume= {67},
    pages={281--300}
}

@article{Wieferich1908BeweisDS,
  title={Beweis des {S}atzes, da{\ss} sich eine jede ganze {Z}ahl als {S}umme von h{\"o}chstens neun positiven {K}uben darstellen l{\"a}{\ss}t},
  author={Wieferich, A.},
    journal = {Mathematische Annalen},
  year={1908},
  volume={66},
  pages={95-101},
  url={https://api.semanticscholar.org/CorpusID:121386035}
}

@article{Kempner1912BemerkungenZW,
  title={Bemerkungen zum {W}aringschen {P}roblem},
  author={Kempner, A. J.},
    journal = {Mathematische Annalen}, 
  year={1912},
  volume={72},
  pages={387-399},
  url={https://api.semanticscholar.org/CorpusID:120101223}
}

@article{balasubramanian,
    AUTHOR = {Balasubramanian, R.},
     TITLE = {On {W}aring's problem: {$g(4)\leq 20$}},
   JOURNAL = {Hardy-Ramanujan Journal},
    VOLUME = {8},
      YEAR = {1985},
     PAGES = {1--40}
}

@article{deshouillers,
    AUTHOR = {Deshouillers, J. M. and Dress, F.},
     TITLE = {Sums of {$19$} biquadrates: on the representation of large
              integers},
   JOURNAL = {Annali della Scuola Normale Superiore di Pisa. Classe di
              Scienze. Serie IV},
    VOLUME = {19},
      YEAR = {1992},
    NUMBER = {1},
     PAGES = {113--153}
}

@article{chen1964,
    author = { Chen, J.R.},
    title = {Waring’s problem for $g(5) = 37$},
    journal = {Scientia Sinica. Zhongguo Kexue},
    year = {1964},
    volume = {13},
    pages = {1547--1568}
}

@article{pillai1940,
    author = { Pillai, S. S.},
    title = {On {W}aring’s problem: $g(6) = 73$},
    journal = {Proceedings of the Indian Academy of Sciences},
    year = {1940},
    volume={12},
    pages={30--40}
}

@online{github,
  author = {Dong, A.},
  title = {Algorithm for Admissible Pairs},
  year = {2025},
  publisher = {GitHub},
  note = {Available at: \href{https://github.com/Anjisweety2/Algorithm-for-Admissible-Pairs.git}{Algorithm for Admissible Pairs}}
}

\end{document}